\numberwithin{equation}{section}
\newcommand{\vc}[1]{{\bf #1}}
\newcommand{\ii}{{\rm i}}
\newcommand{\e}[1]{{\rm e}^{#1}}
\newtheorem{example}{Example}
\newtheorem{remark}{Remark}
\newtheorem{theorem}{Theorem}
\newtheorem{corollary}{Corollary}
\newtheorem{lemma}{Lemma}
\begin{document}
\title[Steepest descent applied to scattering integrals]{Applying the numerical method of steepest descent on multivariate oscillatory integrals in scattering theory}
\author{Andreas Asheim}
\address{Departement Computerwetenschappen,\\ KU Leuven, 3001 Heverlee, Belgium}
\email{andreas.asheim@cs.kuleuven.be}
\thanks{Supported by the Norwegian Research Council's Magne S. Espedahl Fellowship.}
\date{\today}

\begin{abstract}
In this paper we demonstrate that the numerical method of steepest descent fails when applied in a straight forward fashion to the most commonly occurring highly oscillatory integrals in scattering theory. Through a polar change of variables, however, the integral can be brought on a form that can be solved efficiently using a mix of oscillatory integration techniques and classical quadrature. The approach is described in detail and demonstrated numerically on integration problems taken from applications.
\end{abstract}

\maketitle

%%%%%%%%%%%%%%%%%%%%%%%%%%%%%%%%%%%%%%%%%%%%%%%%%%%%%%%%%%%%%%%%%%%%%%%%%%%%%%%%%%%%
\section{Introduction}
%%%%%%%%%%%%%%%%%%%%%%%%%%%%%%%%%%%%%%%%%%%%%%%%%%%%%%%%%%%%%%%%%%%%%%%%%%%%%%%%%%%%
Efficient numerical approximation of oscillatory integrals is a challenging problem with a wide range of applications. Multivariate oscillatory integration is particularly difficult, with methods and theoretical results in practice being limited in applicability to certain classes of integrals. This work concerns such a class, namely integrals of the form
\[
\int_D f(\vc{x})\e{\ii\omega g(\vc{x})}{\rm d}\vc{x},
\]
where $g(\vc{x})$ behaves locally around a point $\vc{x}_0\in D\subset \mathbb{R}^n$ like $|\vc{x}-\vc{x}_0|^\alpha$, and $\omega$ may be large compared to $D$. This particular kind of integral appears frequently in literature, typically with $n=2,3$ and $\alpha=1$, e.g. in acoustics \cite{pierce1989acoustics,Bouwkamp:1954wt}, and scattering theory \cite{nedelec,voronovich1999wave,ChandlerWilde:2012kq}, and more specifically in high frequency BEM \cite{BRUNO:2007gt}.  The Greens function of the Helmholtz equation possesses this kind of special point, and this is clearly one reason for the ubiquitousness of these kinds of integrals; any computation of a wave field in space from a surface field, like in e.g. BEM, involves integrating the Green's function.

An extensive amount of research has been devoted to methods for efficient evaluation of univariate integrals, integrals of the form,
\begin{equation}\label{eq:HOI}
\int_a^bf(x)\e{\ii\omega g(x)}{\rm d}x,
\end{equation}
where $\omega$ is a parameter that may be large. Recently developed methods for such integrals include variants of the Filon-type methods \cite{Iserles:usingderivatives,Iserles:2004ke,Dominguez:2011td,Asheim:2008eb}, the numerical method of steepest descent \cite{Huybrechs:analyticcontinuation,Asheim:2010kc}, and Levin-type methods \cite{Levin:1996kp,Olver:2005jp}. For an oscillatory integral of the form \eqref{eq:HOI}, certain special points, namely endpoints, singularities, and stationary points, i.e., points where $g'(x)=0$, determine the asymptotic behaviour of the integral as $\omega\to\infty$. The above mentioned methods treat these asymptotically contributing points specifically such that terms in the asymptotic expansion of the error vanish, leading to methods with an error of size $\mathcal{O}(\omega^{-\beta})$, for some $\beta>0$. For example, a Filon-type approximation is obtained by interpolating $f(x)$ with a number of derivatives at all contributing points, and the approximation is constructed a liner combination of moments.  In this case the asymptotic order $\beta$ of the approximation depends on how many derivatives that are interpolated \cite{Iserles:usingderivatives}.

Multivariate oscillatory integrals have the general form
\[
\int_D f(\vc{x})\e{\ii\omega g(\vc{x})}{\rm d}\vc{x},
\]
where now $D\subset \mathbb{R}^n$, and $f,g:\mathbb{R}^n\to\mathbb{R}$. In this case we can think of the endpoints in the univariate case as corresponding to corners of $\partial D$ in $n$ dimensions. The notion of a stationary points naturally extends to higher dimensions, being a point where $\nabla g(\vc{x})=0$. In addition, a new sort of contributing point without a straight-forward parallel in the univariate case will appear, namely the resonance point, which is a point where $\nabla g(\vc{x})$ is perpendicular to the boundary. This, and other added complications, are reason why general-purpose methods for multivariate oscillatory integrals can be considered a hard problem. The Filon-type methods do generalise in a quite natural way to higher dimensions \cite{Iserles:multivariate_derivatives,Iserles:multivariate_stationary}, however, moments are not generally available in higher dimensions, and supplementary methods are needed to obtain these. Levin-type methods do not rely on moments, but no general procedure for handling cases with stationary points and resonance points is available for these methods \cite{Olver:2006iq}. The numerical method of steepest descent can be generalised to multivariate cases by regarding multivariate integrals as nested univariate integrals \cite{Huybrechs:multivariate}, and this approach can handle stationary points and resonance points to a certain degree. This is likely the most general method among the above mentioned.

In this paper it will however be demonstrated that whenever $g(\vc{x})$ behaves locally around a point $\vc{x}_0\in D$ like $|\vc{x}-\vc{x}_0|$, this induces problems for the method of steepest descent. As such, the only method which seem applicable is the so-called localised method of stationary phase due to Ganesh et al. \cite{Ganesh:2007ee}, which was developed specifically for use in a BEM solver. Here it is suggested that a polar change of variables is needed to cancel a singularity of the amplitude function $f(\vc{x})$ at $\vc{x}_0$, as is commonly seen in scattering integrals, and the remaining part is integrated with a combination of oscillatory and non-oscillatory techniques. In this work it will however become apparent that this change of variables is also precisely what is needed in order to apply the numerical method of steepest descent. 

This paper is built up as follows: Some relevant facts about the numerical steepest descent method is presented  in section \S\ref{S:cubature}, and we shall see demonstrated by examples the failure of numerical steepest descent when applied to the integrals of our interest, as well as an outline of how to modify the method such that it works. In section \S\ref{S:loc} we shall put these observations on a firmer foundation and derive some results that will aid in constructing efficient methods for these integrals based in the method of steepest descent. Finally, in \S\ref{S:exp}, we demonstrate the resulting methods on one artificial example, and two examples taken directly from applications.

%%%%%%%%%%%%%%%%%%%%%%%%%%%%%%%%%%%%%%%%%%%%%%%%%%%%%%%%%%%%%%%%%%%%%%%%%%%%%%%%%%%%
\section{Numerical steepest descent and cubature}\label{S:cubature}
%%%%%%%%%%%%%%%%%%%%%%%%%%%%%%%%%%%%%%%%%%%%%%%%%%%%%%%%%%%%%%%%%%%%%%%%%%%%%%%%%%%%

Applying the method of steepest descent to the univariate integral \eqref{eq:HOI}, assuming that $f(x)$ and $g(x)$ can be analytically extended to the complex plane, and that $g(x)$ is monotone, yields \cite{Huybrechs:analyticcontinuation}
\[
\int_a^bf(x)\e{\ii\omega g(x)}{\rm d}x=G(a)-G(b),
\]
where
\begin{equation}\label{eq:sdint}
G(x)=\int_0^\infty f(h_x(p))\e{-\omega p}h_x'(p){\rm d}p,
\end{equation}
where in turn $h_x(p)$ is the path of steepest descent, implicitly defined through the equation.
\[
g(h_x(p))=g(x)+\ii p.
\]
Note that the requirement that $f(x)$ and $g(x)$ are analytic can be relaxed to analytic in appropriate regions of the complex plane \cite{Huybrechs:2012gm}. More generally a singularity in the complex plane might introduce an exponentially subdominant term \cite{Huybrechs:analyticcontinuation}, and
\[
\int_a^bf(x)\e{\ii\omega g(x)}{\rm d}x=G(a)-G(b)+\mathcal{O}(\e{-c\omega}).
\]
In the following the notation $\mathcal{O}(\e{-c\omega})$ will be used to indicate any exponentialy subdominant term that might be present, depending on the analytic properties of the integrand.

For small $p$, the path of steepest descent will behave like
\[
h_x(p)\sim \sqrt[\alpha]{\frac{\ii \alpha!}{g^{(\alpha)}(x)}p},
\]
i.e., some brach of this expression, where $\alpha-1$ is the number of vanishing derivatives of $g(x)$ at the point $x$ \cite{Asheim:2010kc}. An important consequence of this is that a $p^{-(\alpha-1)/\alpha}$-type singularity is introduced in the steepest descent integral \eqref{eq:sdint}. A substitution $p\to q^\alpha$ will produce an analytic integrand, given that $f(x)$ and $g(x)$ are analytic,
\begin{equation}\label{eq:sdint2}
G(x)=\alpha \int_0^\infty f(h_x(q^\alpha))\e{-\omega q^\alpha}h_x'(q^\alpha)q^{\alpha-1}{\rm d}q
\end{equation}
\[
=\frac{\alpha}{\omega}\int_0^\infty f(h_x(\frac{t^\alpha}{\omega}))h_x'(\frac{t^\alpha}{\omega})t^{\alpha-1}\e{-t^\alpha}{\rm d}t.
\]
The last integral is on a form that can numerically be resolved efficiently by Gaussian quadrature. Given a quadrature rule with $m$ points and weights $\{x_j^\alpha,w_j^\alpha\}_{j=1}^m$ which is Gaussian with respect to the weight $\e{-x^\alpha}$ on $[0,\infty)$, i.e., Gauss-Legendre for $\alpha=1$, and half-space Gauss-Hermite for $\alpha=2$. By Lemma 1 from \cite{Deano:2009jc} the error of this approximation applied to the integral \eqref{eq:sdint2} is $\mathcal{O}(\omega^{-(2m-1)/\alpha})$.

%%%%%%%%%%%%%%%%%%%%%%%%%%%%%%%%%%%%%%%%%%%%%%%%%%%%%%%%%%%%%%%%%%%%%%%%%%%%%%%%%%%%
\subsection{Cubature}
%%%%%%%%%%%%%%%%%%%%%%%%%%%%%%%%%%%%%%%%%%%%%%%%%%%%%%%%%%%%%%%%%%%%%%%%%%%%%%%%%%%%

The method of steepest descent is inherently a procedure for univariate integrals; no higher dimensional extension of the paths of steepest descent are known. The method can however be developed into a cubature rule by regarding a double integral as nested univariate integration. Huybrechs \& Vandewalle in \cite{Huybrechs:multivariate} demonstrate this approach on cases with stationary points, resonance points and corner points, giving a method with high asymptotic order.

In the following example we shall however see that the numerical method fails on the particular type of integral that we are interested in.
\begin{example}\label{ex:1}
From \cite{Huybrechs:multivariate} we have
\[
\int_0^a\int_0^b f(x,y)\e{\ii\omega\sqrt{x^2+y^2}}{\rm d}y{\rm d}x=F(0,0)-F(a,0)-F(0,b)+F(a,b)+\mathcal{O}(\e{-c\omega}).
\]
where,
\begin{equation}\label{eq:F}
F(x,y)=\!\int_0^\infty\!\!\! \int_0^\infty \!\!\! f(u_{x,y}(p),v_y(u_{x,y}(p),q)) u_{x,y}'(p)\frac{\partial v_y}{\partial q}(u_x(p),q)\e{-\omega(p+q)}{\rm d}p{\rm d}q.
\end{equation}
The paths of steepest descent for the inner integral are easily obtained, and we get
\[
v_y(x,q)=\sqrt{y^2-q^2+2\ii q \sqrt{x^2+y^2}},
\]
\[
u_{x,y}(p)=\sqrt{x^2-p^2+2\ii p \sqrt{x^2+y^2}}.
\]
Now, assume $f(x,y)=1$ and let us then study the contribution from the origin in particular,
\[
F(0,0)=-\int_0^\infty \int_0^\infty \frac{p+q}{\sqrt{2pq+q^2}}\e{-\omega(p+q)}{\rm d}p{\rm d}q.
\]
It can be showed that $F(0,0)=-\frac{\pi}{2\omega^2}$. There is a sqare-root singularity at $q=0$, which follows from the fact that the oscillator function $\sqrt{x^2+y^2}$ has a line of stationary points along $y=0$. Substituting $q\to q^2$, will resolve this,
\[
F(0,0)=-2\int_0^\infty \int_0^\infty \frac{p+q^2}{\sqrt{2p+q^2}}\e{-\omega(p+q^2)}{\rm d}p{\rm d}q
\]
\[
=-2\omega^{-2}\int_0^\infty \int_0^\infty \frac{t_1+t_2^2}{\sqrt{2t_1+t_2^2}}\e{-(t_1+t_2^2)}{\rm d}t_1{\rm d}t_2.
\]
Now it's apparent that this method can never have asymptotic order higher than $2$ unless this integral is resolved exactly by the quadrature method. In other words, the relative error will generally not decrease with $\omega$. Moreover, a pole is present at $q=\pm \ii \sqrt{2p}$, which will also reduce the efficiency of any standard quadrature quadrature rule, like e.g. Gauss-quadrature.
\end{example}

The failure of the steepest descent method in this case should not come as a surprise, since the oscillator function $\sqrt{x^2+y^2}$ is not differentiable at the origin. Indeed, the line of stationary points on the $y$-axis actually degenerates into a regular endpoint as $x\to0$. Now, the observant reader has probably already realised that a polar change of variables would swiftly resolve the problem in this example. One way of seeing this is by observing that, in the sense of a regularised integral,
\[
F(0,0)=\int_0^\infty\int_0^\infty \e{\ii\omega\sqrt{x^2+y^2}}{\rm d}y{\rm d}x=\int_0^{\pi/2}\int_0^\infty \e{\ii\omega r}r{\rm d}r{\rm d}\theta,
\]
which is, of course, the way one arrives at the exact value of $-\frac{\pi}{2\omega^2}$. However, this form also turns out to be much better suited for computations. Let us remove the assumption that $f(x,y)=1$. Now, again in sense of a regularised integral,
\[
F(0,0)=\int_0^\infty\int_0^\infty f(x,y)e^{i\omega\sqrt{x^2+y^2}}{\rm d}x{\rm d}y=\int_0^{\pi/2}\int_0^\infty \tilde{f}(r,\theta)e^{i\omega r}r{\rm d}r{\rm d}\theta,
\]
where
\[
\tilde{f}(r,\theta):=f(r\cos\theta,r\sin\theta).
\]
The method of steepest descent applied to the inner integral gives, assuming that $\tilde{f}(r,\theta)$ is  analytic in a sufficiently large part of the complex plane in its first argument,
\[
F(0,0)=i\int_0^{\pi/2}\int_0^\infty e^{-\omega p}\tilde{f}(ip,\theta){\rm d}p{\rm d}\theta.
\]
Note that the outer integral is non-oscillatory as a function of $\theta$. Thus, the outer integral can be resolved with a classical quadrature rule. The inner integral is efficiently handled by scaled Gauss-Laguerre quadrature. In other words, the polar change of variables has effectively confined the oscillatory behaviour to the radial dimension. In the following we shall see that this is not limited to this particular oscillator function. Also note that if $f(x,y)\sim (x^2+y^2)^{-\nu/2}$ near the origin, then $\tilde{f}(r,\theta)\sim r^{-\nu+1}$, thus cancelling the $1/r$-type singularity typically seen in scattering integrals. Finally note that the integration range of the $\theta$-integration reflects the shape of the domain around the origin. If the origin was contained in the domain, then the outer integration range would be $[0,2\pi]$, and we would be integrating a periodic function, which means that a simple trapezoidal rule would be the natural choice of quadrature in this case.

%%%%%%%%%%%%%%%%%%%%%%%%%%%%%%%%%%%%%%%%%%%%%%%%%%%%%%%%%%%%%%%%%%%%%%%%%%%%%%%%%%%%
\section{Theoretical results}\label{S:loc}
%%%%%%%%%%%%%%%%%%%%%%%%%%%%%%%%%%%%%%%%%%%%%%%%%%%%%%%%%%%%%%%%%%%%%%%%%%%%%%%%%%%%

The method outlined in the example in the previous section works under the premise that the contribution from the special point $x_0$ can be separated, and that it is of the form 
\[
\int_0^\beta\int_0^\infty \tilde{f}(r,\theta)e^{i\omega\tilde{g}(r,\theta)}{\rm d}r{\rm d}\theta.
\]
In this section we shall prove some fairly general results regarding the application of the method of steepest descent in conjunction with a polar change of variables. The results are formulated as a kind of \emph{pre-quadrature rule}, by which we mean that only the problematic part has been treated. No choice of quadrature rule has been made for the rest, but existing methods are here applicable. Though most real-world cases are with $n=2,3$, the results are formulated for integration in $\mathbb{R}^n$, since this implies little extra work. The tool here is \emph{$n$-spherical coordinates} \cite{Blumenson:1960cj}, which is the higher dimensional equivalent of the well-known polar and spherical coordinates. Here the substitution takes the form,
\begin{align*}
&x_j\to r\cos(\varphi_j)\prod_{l=1}^{j-1}\sin(\varphi_l),\qquad j=1,\hdots,n-1,\\
&x_{n}\to r \prod_{l=1}^{n-1}\sin(\varphi_l),
\end{align*}
where $\varphi_j\in[0,\pi]$, $j=1,\hdots, n-2$, and $\varphi_{n-1}\in[0,2\pi]$. In vectorial form this can be written 
\[
\vc{x}\to r\Theta,\qquad \Theta\in\mathbb{S}^{n-1},
\]
where $\mathbb{S}^{n-1}$ denotes the $(n-1)$-sphere. The volume element has the simple form $r^{n-1}{\rm d}\Theta$, where,
\[
{\rm d}\Theta = \prod_{l=1}^{n-1}\sin^{n-1-l}(\varphi_l){\rm d}\varphi_l.
\]

In the following, we shall assume that the special point $\vc{x}_0=0$, and that $g(\vc{x_0})=0$. Note that this implies no lack of generality, since this represents a simple translation of the coordinate system, plus a scaling of the integral: $\int f(\vc{x})\exp(\ii\omega g(\vc{x})){\rm d}x=\exp(\ii\omega g(\vc{x_0}))\int f(\vc{x_0+x})\exp(\ii\omega (g(\vc{x_0+x})-g(\vc{x_0})){\rm d}x$.

%%%%%%%%%%%%%%%%%%%%%%%%%%%%%%%%%%%%%%%%%%%%%%%%%%%%%%%%%%%%%%%%%%%%%%%%%%%%%%%%%%%%
\subsection{Unbounded integrals}
%%%%%%%%%%%%%%%%%%%%%%%%%%%%%%%%%%%%%%%%%%%%%%%%%%%%%%%%%%%%%%%%%%%%%%%%%%%%%%%%%%%%

For the unbounded case we consider a domain of integration that is an infinite cone defined in terms of a subset of the $(n-1)$-sphere, $W\subset \mathbb{S}^{n-1}$. Note that this case includes the case of the entire Euclidean space as a special case.

\begin{lemma}\label{thm:unbounded}
For $n>1$, assume that the integral,
\[
I[f]:=\int_{D} f(\vc{x})e^{\ii \omega g(\vc{x})}{\rm d}\vc{x},
\]
exists, with $D$ being an infinite cone: $D=\{r\Theta,\ {\rm where}\ \Theta\in W\subset\mathbb{S}^{n-1}, \ r\in[0,\infty] \}$. Suppose that for the oscillator function $g(\vc{x})$ the following conditions hold whenever $\vc{x}\in D/\{0\}$:
\begin{enumerate}
\item
$g(\vc{x})$ is continuously differentiable.
\item 
$\nabla g(\vc{x})\neq 0.$
\item
$ \frac{\partial}{\partial r}g(r\Theta)>0$.
\end{enumerate}
Assume in addition that $r^{n-1}f(r\Theta)$ and $g(r\Theta)$, $\Theta\in W$, are analytic functions of $r$ in a neighbourhood of the origin.
Then 
\begin{equation}\label{eq:radial}
I[f] = \frac {1} n\int_{W}\int_0^\infty f(\rho_0(p,\Theta)\Theta)\frac{\partial (\rho_0^n)}{\partial p}(p,\Theta)\e{-\omega p}{\rm d}p{\rm d}\Theta+\mathcal{O}(\e{-c\omega}),
\end{equation}
where $\rho_0(p,\theta)$ satisfies the equation
\[
g(\rho_0(p,\Theta)\Theta)=\ii p.
\] 
\end{lemma}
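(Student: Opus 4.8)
The plan is to reduce the multivariate integral to a family of univariate steepest-descent integrals parametrised by the angular variable, and then to apply the one-dimensional theory of \S\ref{S:cubature} ray by ray. First I would perform the $n$-spherical change of variables, writing
\[
I[f]=\int_W\int_0^\infty r^{n-1}f(r\Theta)\e{\ii\omega g(r\Theta)}\,{\rm d}r\,{\rm d}\Theta,
\]
so that for each fixed $\Theta\in W$ the inner integral is a univariate oscillatory integral in $r$ with amplitude $\tilde f(r,\Theta):=r^{n-1}f(r\Theta)$ and oscillator $\tilde g(r,\Theta):=g(r\Theta)$. The point of the substitution is that the Jacobian $r^{n-1}$ is absorbed into the amplitude, the angular weight ${\rm d}\Theta$ carries no oscillation, and the hypotheses have been chosen precisely so that the inner integrand meets the requirements of the univariate method: $\tilde g$ is monotone in $r$ by condition (3), $\tilde f$ and $\tilde g$ are analytic in $r$ near the origin by assumption, and conditions (1)--(2) rule out any stationary point of $g$ on $D\setminus\{0\}$, so that the apex $r=0$ is the only endpoint contributing algebraically.

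Next I would apply the univariate steepest descent identity to the inner integral. Since $\tilde g(0,\Theta)=g(0)=0$, the steepest descent path emanating from the lower endpoint is exactly the curve $\rho_0(p,\Theta)$ defined by $g(\rho_0(p,\Theta)\Theta)=\ii p$, which is the path $h_0$ of \eqref{eq:sdint} for this oscillator. Monotonicity together with decay of the integral at infinity makes the contribution from the upper endpoint $r=\infty$ vanish, leaving
\[
\int_0^\infty \tilde f(r,\Theta)\e{\ii\omega\tilde g(r,\Theta)}\,{\rm d}r=\int_0^\infty \rho_0^{\,n-1}f(\rho_0\Theta)\frac{\partial\rho_0}{\partial p}\e{-\omega p}\,{\rm d}p+\mathcal{O}(\e{-c\omega}),
\]
the exponentially subdominant remainder accounting for any singularities encountered during the deformation. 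The final algebraic step is to recognise $\rho_0^{\,n-1}\,\frac{\partial\rho_0}{\partial p}=\frac1n\,\frac{\partial(\rho_0^{\,n})}{\partial p}$, which turns the radial factor into the form displayed in \eqref{eq:radial}; integrating the resulting expression over $W$ in ${\rm d}\Theta$ then gives the claim.

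The main obstacle I anticipate is not any single step but making the deformation uniform in $\Theta$. The univariate theory is applied pointwise in $\Theta$, yet the conclusion requires a single constant $c>0$ in $\mathcal{O}(\e{-c\omega})$ valid after integration over all of $W$; this needs the steepest descent contour $\rho_0(\cdot,\Theta)$, the common neighbourhood of analyticity in $r$, and the distance to the nearest singularity to be controlled uniformly for $\Theta\in W$. I would handle this by invoking compactness of $\overline{W}$ together with the continuity supplied by condition (1) to extract a uniform width of the analytic strip and a uniform lower bound on $\frac{\partial}{\partial r} g(r\Theta)$, and I would verify that the upper-endpoint contribution decays uniformly, so that it may be absorbed into the remainder and the order of integration in $r$ and $\Theta$ exchanged. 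A secondary point worth stating explicitly is that $\rho_0(p,\Theta)$ is well defined and smooth because condition (3) keeps $\frac{\partial}{\partial r}\tilde g$ from vanishing along the path, guaranteeing that the defining equation $g(\rho_0\Theta)=\ii p$ can be inverted by the implicit function theorem.
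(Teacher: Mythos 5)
Your proposal is correct and follows essentially the same route as the paper: an $n$-spherical change of variables, univariate steepest descent applied to the radial integral ray by ray (with the contribution from the endpoint at infinity vanishing, which the paper phrases as truncating at $r=R$ and letting $R\to\infty$), the identity $\rho_0^{\,n-1}\,\partial_p\rho_0=\tfrac1n\,\partial_p(\rho_0^{\,n})$, and integration over $W$. Your explicit attention to uniformity in $\Theta$ of the constant $c$ and of the analyticity neighbourhood is a point the paper's proof passes over silently, but it does not change the structure of the argument.
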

\begin{proof}
Writing the integral in $n$-spherical coordinates, we get,
\[
I[f] = \int_{W}\int_0^\infty f(r\Theta)\e{\ii\omega g(r\Theta)}r^{n-1}{\rm d}r{\rm d}\Theta.
\]
Let us apply the method of steepest descent to the inner integral. Now conditions 1, 2, and 3 on $g(\vc{x})$ ensures that there are no contributing points of $g(r\Theta)$ for $r>0$, and condition 3 ensures that $\rho_0(p,\Theta)$ exists for all $\Theta$, thus,
\begin{multline*}
\int_0^R f(r\Theta)\e{\ii\omega g(r\Theta)}r^{n-1}{\rm d}r= 
\frac{1}{n}\int_0^\infty f(\rho_0(p,\Theta)\Theta)\frac{\partial (\rho_0^n)}{\partial p}(p,\Theta)\e{-\omega p}{\rm d}p\\
-\frac{\e{i\omega g(R\Theta)}}{n}\int_0^\infty f(\rho_R(p,\Theta)\Theta)\frac{\partial (\rho_R^n)}{\partial p}(p,\Theta)\e{-\omega p}{\rm d}p + \mathcal{O}(\e{-c\omega})
\end{multline*}
where $g(\rho_R(p,\Theta)\Theta)=g(R\Theta)+\ii p$. Letting $R\to\infty$ the second integral must vanish. The result follows. 
\end{proof}

From this Lemma a very simple, and potentially highly efficient, quadrature rule, or more precisely a pre-quadrature rule, for these particular kinds of integrals follows.

\begin{theorem}\label{thm:unbounded_rule}
Assume the conditions of Lemma \ref{thm:unbounded} are satisfied. Let $\alpha$ be the smallest integer such that
\begin{equation}\label{eq:cond1}
\frac{\partial^l g(r\Theta)}{\partial r^l}\Big{|}_{r=0+}= 0.\quad \forall \Theta\in W\subset\mathbb{S}^{n-1},\quad l=0,1,\hdots,\alpha-1,
\end{equation}
and assume that 
\begin{equation}\label{eq:cond2}
\frac{\partial^\alpha g(r\Theta)}{\partial r^\alpha}\Big{|}_{r=0+}>0,\quad \forall \Theta\in W\subset\mathbb{S}^{n-1}.
\end{equation}
Let $\{x_j^\alpha,w_j^\alpha\}_{j=1}^m$ be the weights and nodes of the $m$-point rule being Gaussian with respect to the weight $\e{-x^\alpha}$ on $[0,\infty)$. Define,
\begin{equation}\label{eq:unbounded_rule}
Q_r[f](\Theta) = \frac {\alpha} {n\omega}\sum_{j=1}^m w_jx_j^{\alpha-1}f(\rho_0(x_j^\alpha/\omega,\Theta)\Theta)\frac{\partial (\rho_0^n)}{\partial p}(x_j^\alpha/\omega,\Theta).
\end{equation}
Then
\[
I[f]-\int_{W}Q_r[f](\Theta){\rm d}\Theta=\mathcal{O}(\omega^{-\frac{2m-1}\alpha}).
\]

\end{theorem}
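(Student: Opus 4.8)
The plan is to reduce the statement, fibre by fibre over $\Theta\in W$, to the univariate situation already treated in \S\ref{S:cubature}, and then to integrate the resulting error bound over $W$. The representation \eqref{eq:radial} of Lemma \ref{thm:unbounded} has already carried out the deformation onto the paths of steepest descent and isolated the exponentially subdominant remainder. Writing
\[
J(\Theta):=\frac{1}{n}\int_0^\infty f(\rho_0(p,\Theta)\Theta)\frac{\partial(\rho_0^n)}{\partial p}(p,\Theta)\e{-\omega p}{\rm d}p,
\]
so that $I[f]=\int_W J(\Theta){\rm d}\Theta+\mathcal{O}(\e{-c\omega})$, and observing that $Q_r[f](\Theta)$ of \eqref{eq:unbounded_rule} is precisely the $m$-point Gauss rule with weight $\e{-x^\alpha}$ applied to $J(\Theta)$ after the substitution $p=t^\alpha/\omega$, the theorem reduces to the fibre-wise estimate $|J(\Theta)-Q_r[f](\Theta)|=\mathcal{O}(\omega^{-(2m-1)/\alpha})$ together with its uniformity in $\Theta$.

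For each fixed $\Theta$ I would run the argument of \eqref{eq:sdint}--\eqref{eq:sdint2} verbatim, with $r^{n-1}f(r\Theta)$ playing the role of the amplitude and $\rho_0(\cdot,\Theta)$ the role of the steepest descent path $h_x$. Conditions \eqref{eq:cond1}--\eqref{eq:cond2} say that $g(r\Theta)$ has a zero of order exactly $\alpha$ in $r$ at the origin with positive leading coefficient, so that $\alpha-1$ radial derivatives vanish there and $\rho_0(p,\Theta)\sim(\ii\alpha!\,p/\partial_r^\alpha g)^{1/\alpha}$; the integrand of $J(\Theta)$ therefore carries exactly the $p^{-(\alpha-1)/\alpha}$ endpoint singularity discussed in \S\ref{S:cubature}. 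The analyticity hypotheses on $r^{n-1}f(r\Theta)$ and $g(r\Theta)$ make $\rho_0(q^\alpha,\Theta)$, and hence the whole transformed integrand, analytic in $q$ under $p=q^\alpha$, putting $J(\Theta)$ into the form on the right of \eqref{eq:sdint2}. The univariate estimate already invoked in \S\ref{S:cubature} (Lemma 1 of \cite{Deano:2009jc}) then delivers the fibre-wise bound. Crucially, since \eqref{eq:cond1}--\eqref{eq:cond2} hold with the \emph{same} $\alpha$ for every $\Theta\in W$, a single Gauss rule serves all fibres.

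The step I expect to be the main obstacle is upgrading this pointwise estimate to one uniform in $\Theta$, for only then does the outer integration over $W$ preserve the asymptotic order rather than producing a merely $\Theta$-dependent constant. The implied constant in the univariate estimate is controlled by the size of the region of analyticity of the transformed integrand and by bounds on it there, and I would verify that these data may be chosen independently of $\Theta$. The natural mechanism is compactness of $\overline{W}\subset\mathbb{S}^{n-1}$ together with continuity of $\Theta\mapsto\frac{\partial^\alpha g(r\Theta)}{\partial r^\alpha}\big|_{r=0+}$, which by \eqref{eq:cond2} is strictly positive and hence bounded below on $\overline{W}$; this keeps the leading coefficient of $\rho_0(q^\alpha,\Theta)$ away from $0$ and $\infty$ and prevents its domain of analyticity in $q$ from shrinking to a point, yielding a single constant $C$ with $|J(\Theta)-Q_r[f](\Theta)|\le C\,\omega^{-(2m-1)/\alpha}$ for all $\Theta$.

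It then remains only to assemble the estimate: integrating the uniform bound over the finite-measure set $W$ gives $\big|\int_W(J(\Theta)-Q_r[f](\Theta)){\rm d}\Theta\big|\le|W|\,C\,\omega^{-(2m-1)/\alpha}$, and adding the $\mathcal{O}(\e{-c\omega})$ remainder of Lemma \ref{thm:unbounded}, which is dominated by the algebraic term, produces the claimed $I[f]-\int_W Q_r[f](\Theta){\rm d}\Theta=\mathcal{O}(\omega^{-(2m-1)/\alpha})$.
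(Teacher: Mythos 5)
Your proposal follows essentially the same route as the paper's proof: both use the representation \eqref{eq:radial} from Lemma \ref{thm:unbounded}, exploit conditions \eqref{eq:cond1}--\eqref{eq:cond2} to obtain the fractional-power expansion of $\rho_0(p,\Theta)$, remove the $p^{-(\alpha-1)/\alpha}$ singularity via the substitution $p\to q^\alpha$, invoke Lemma 1 of \cite{Deano:2009jc} fibre-wise in $\Theta$, and integrate over $W$. In fact you are somewhat more careful than the paper, which simply states ``integrating over $W$ yields the sought conclusion''; your compactness-and-continuity argument for uniformity of the implied constant in $\Theta$ fills in a step the paper leaves tacit.
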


\begin{proof}
Departing from Eq. \eqref{eq:radial}, we consider the inner integral
\[
I_{inner}(\Theta):=\frac 1n \int_0^\infty f(\rho_0(p,\Theta)\Theta)\frac{\partial (\rho_0^n)}{\partial p}(p,\Theta)\e{-\omega p}{\rm d}p.
\]
From of conditions \eqref{eq:cond1} and \eqref{eq:cond2} it follows that $\rho(p,\Theta)$ has an expansion of the form
\[
\rho_0(p,\Theta)\sim a_1(\Theta)p^{1/\alpha}+a_2(\Theta)p^{2/\alpha}+\hdots,
\] 
valid for all $\Theta\in W$. Introducing the change of variables $p\to q^\alpha$ we get 
\[
I_{inner}(\Theta)=\frac \alpha n\int_0^\infty f(\rho_0(q^\alpha,\Theta)\Theta)\frac{\partial \rho_0^n}{\partial p}(q^\alpha,\Theta)q^{\alpha-1}\e{-\omega q^\alpha} {\rm d}q,
\]
whose integrand is analytic around $q=0$. Now, applying Lemma 1 of \cite{Deano:2009jc},
\[
I_{inner}(\Theta)-\frac {\alpha} {n\omega}\sum_{j=1}^m w_jx_j^{\alpha-1}f(\rho_0(x_j^\alpha/\omega,\Theta)\Theta)\frac{\partial (\rho_0^n)}{\partial p}(x_j^\alpha/\omega,\Theta)= \mathcal{O}(\omega^{-\frac{2m-1}{\alpha}}).
\]
Integrating over $W$ yields the sought conclusion.
\end{proof}

\begin{remark}
Note here that $\frac{\partial (\rho_0^n)}{\partial p}\sim q^{n-1}$, so in the case where $f(\vc{x})$ is regular at the origin, applying a Gaussian rule with the weight $x^{n-1}\e{-x^\alpha}$ will yield higher asymptotic accuracy.
\end{remark}

The following corollary follows directly from the above theorem, and implies that when a quadrature method is applied for the outer integration, the error will decrease asymptotically with $\omega$ down to the error level of the (non-oscillatory) outer integration. 

\begin{corollary}\label{corr:1}
Let $Q_W:W\to\mathbb{R}$ be a quadrature rule,
\[
\int_W h(\Theta){\rm d}\Theta=Q_W[h]+E[h].
\]
Then
\[ 
I[f]-Q_W[Q_r[f]]=E[Q_r[f]]+\mathcal{O}(\omega^{-\frac{2m-1}\alpha}).
\]
\end{corollary}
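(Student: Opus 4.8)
The plan is to combine the two error estimates already in hand — the asymptotic radial estimate of Theorem~\ref{thm:unbounded_rule} and the defining relation of the outer quadrature $Q_W$ — by a single add-and-subtract step. The key observation is that Theorem~\ref{thm:unbounded_rule} already delivers its error bound \emph{after} integration over $W$, so the quantity $\int_W Q_r[f](\Theta)\,{\rm d}\Theta$ is the natural intermediate object sitting between the exact integral $I[f]$ and the fully discretised approximation $Q_W[Q_r[f]]$.

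First I would apply the defining relation $\int_W h(\Theta)\,{\rm d}\Theta = Q_W[h] + E[h]$ to the particular integrand $h = Q_r[f]$, obtaining
\[
\int_W Q_r[f](\Theta)\,{\rm d}\Theta = Q_W[Q_r[f]] + E[Q_r[f]].
\]
Next I would write
\[
I[f] - Q_W[Q_r[f]] = \Big(I[f] - \int_W Q_r[f](\Theta)\,{\rm d}\Theta\Big) + \Big(\int_W Q_r[f](\Theta)\,{\rm d}\Theta - Q_W[Q_r[f]]\Big),
\]
recognising the second bracket as $E[Q_r[f]]$ by the display above, and the first bracket as precisely the quantity estimated by Theorem~\ref{thm:unbounded_rule} to be $\mathcal{O}(\omega^{-(2m-1)/\alpha})$. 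Substituting the two identifications yields the claimed equation directly.

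There is no substantial obstacle here; the result is a bookkeeping consequence of composing two already-proved estimates. The only point deserving a word of care is that the $\mathcal{O}(\omega^{-(2m-1)/\alpha})$ term is inherited verbatim from Theorem~\ref{thm:unbounded_rule} — where the constant is already uniform, since that estimate was obtained by integrating the pointwise-in-$\Theta$ bound of Lemma~1 of \cite{Deano:2009jc} over $W$ — and that $E[Q_r[f]]$ is kept explicit rather than absorbed, because it still depends on $\omega$ through $Q_r[f]$. The interpretive content of the corollary is exactly this separation: the total error splits into the radial (oscillatory) contribution, which decays like $\omega^{-(2m-1)/\alpha}$, plus the outer-quadrature error $E[Q_r[f]]$, which does not decay in $\omega$ and hence sets the asymptotic error floor of the combined scheme.
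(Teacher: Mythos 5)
Your proposal is correct and is exactly the argument the paper intends: the paper gives no explicit proof, stating only that the corollary ``follows directly from the above theorem,'' and your add-and-subtract of $\int_W Q_r[f](\Theta)\,{\rm d}\Theta$, combined with the defining relation of $Q_W$ and the estimate of Theorem~\ref{thm:unbounded_rule}, is precisely that direct deduction. Your closing remark on the interpretation (the outer-quadrature error setting the asymptotic error floor) also matches the paper's own commentary preceding the corollary.
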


%%%%%%%%%%%%%%%%%%%%%%%%%%%%%%%%%%%%%%%%%%%%%%%%%%%%%%%%%%%%%%%%%%%%%%%%%%%%%%%%%%%%%
\subsection{Bounded integrals}
%%%%%%%%%%%%%%%%%%%%%%%%%%%%%%%%%%%%%%%%%%%%%%%%%%%%%%%%%%%%%%%%%%%%%%%%%%%%%%%%%%%%%

Next we shall investigate the case of integration over a more general bounded domain $D\subset\mathbb{R}^n$,
\[
I[f]:=\iint_{D} f(\vc{x})e^{\ii \omega g(\vc{x})}{\rm d}\vc{x}.
\]
Assume that $D$ is star shaped with respect to the origin,  i.e., the segment $(0,x)$ is contained in $D$ for all $x\in\partial D$, and we can write $D=\{r\Theta, \ \forall r\in[0,R(\Theta)] ,\  \Theta\in W\subset\mathbb{S}^{n-1}\}$.

In $n$-spherical coordinates centred at the origin we then have,
\[
I[f]=\iint_{W}\int_0^{R(\Theta)} f(r\Theta)\e{\ii\omega g(r\Theta)}r^{n-1}{\rm d}r{\rm d}\Theta.
\]
Assuming the functions $r^{n-1}f(r\Theta)$ and $g(r\Theta)$ are analytic as a function of $r$ in a neighbourhood around the origin, just as in the proof of Lemma \ref{thm:unbounded}, we can apply the method of steepest descent and get
\begin{multline}\label{eq:bounded_sd}
I[f]=\frac {1} n\iint_{W}\int_0^\infty f(\rho_0(p,\Theta)\Theta)\frac{\partial (\rho_0^n)}{\partial p}(p,\Theta)\e{-\omega p}{\rm d}p{\rm d}\Theta \\
-\frac {1} n\iint_{W} \e{\ii \omega R(\Theta)}\int_0^\infty f(\rho_R(p,\Theta)\Theta)\frac{\partial (\rho_R^n)}{\partial p}(p,\Theta)\e{-\omega p}{\rm d}p{\rm d}\Theta + \mathcal{O}(\e{-c\omega}),
\end{multline}
where $\rho_R(p,\Theta)$ is defined through the equation $g(\rho_R(p,\Theta))=R(\Theta)+\ii p$. This leads to a theorem of the same kind as Theorem \ref{thm:unbounded_rule}, but with an additional term that includes the contributions from the boundary. 

\begin{theorem}\label{thm:bounded}
Assume $D$ is of the form $D=\{t\Theta, \ \forall t\in[0,R(\Theta)] ,\  \Theta\in W\subset\mathbb{S}^{n-1}\}$, $\forall\Theta\in W\subset\mathbb{S}^{n-1}$. Furthermore assume that the conditions 1, 2, and 3 in Lemma \ref{thm:unbounded} hold for $\vc{x}\in D /\{0\}$, and that conditions \eqref{eq:cond1} and \eqref{eq:cond2} hold. Let $\{x_j^\alpha,w_j^\alpha\}_{j=1}^m$ be the weights and nodes of the $m$-point rule being Gaussian with respect to the weight $\e{-x^\alpha}$ on $[0,\infty)$ and let $Q_r[f]$ be given by equation \eqref{eq:unbounded_rule}. Now define
\begin{equation}\label{eq:bounded_rule}
\tilde{Q}_r[f](\Theta) = Q_r[f](\Theta)-\frac {1} n \e{\ii \omega R(\Theta)}\int_0^\infty f(\rho_R(p,\Theta)\Theta)\frac{\partial (\rho_R^n)}{\partial p}(p,\Theta)\e{-\omega p}{\rm d}p.
\end{equation}
then 
\[
\iint_{W}\tilde{Q}_r[f](\Theta){\rm d}\Theta-I[f]=\mathcal{O}(\omega^{-(2m-1)/\alpha})
\]
\end{theorem}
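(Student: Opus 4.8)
The plan is to mirror the structure of Theorem~\ref{thm:unbounded_rule} but carry along the extra boundary term that appears in the steepest-descent decomposition \eqref{eq:bounded_sd}. The starting point is precisely Eq.~\eqref{eq:bounded_sd}, which already splits $I[f]$ into an origin contribution (identical in form to the one treated in Lemma~\ref{thm:unbounded}) and a boundary contribution weighted by $\e{\ii\omega R(\Theta)}$, up to an exponentially subdominant term $\mathcal{O}(\e{-c\omega})$. Since $\tilde{Q}_r[f](\Theta)$ is defined as $Q_r[f](\Theta)$ minus exactly the boundary integral appearing in \eqref{eq:bounded_sd}, the total error splits cleanly: the boundary integral in $\tilde{Q}_r$ is reproduced \emph{exactly}, so no new error is incurred there, and the only approximation error comes from replacing the origin integral by its quadrature $Q_r[f](\Theta)$.

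First I would write the difference $\iint_W \tilde{Q}_r[f](\Theta)\,{\rm d}\Theta - I[f]$ and substitute the definitions. The boundary terms cancel against the second line of \eqref{eq:bounded_sd}, leaving
\[
\iint_W \tilde{Q}_r[f](\Theta)\,{\rm d}\Theta - I[f] = \iint_W\left( Q_r[f](\Theta) - I_{inner}(\Theta)\right){\rm d}\Theta + \mathcal{O}(\e{-c\omega}),
\]
where $I_{inner}(\Theta)$ is the origin radial integral as in the proof of Theorem~\ref{thm:unbounded_rule}. At this point the argument is \emph{verbatim} that of Theorem~\ref{thm:unbounded_rule}: conditions \eqref{eq:cond1} and \eqref{eq:cond2} give the fractional expansion of $\rho_0(p,\Theta)$, the substitution $p\to q^\alpha$ analytises the integrand, and Lemma~1 of \cite{Deano:2009jc} yields $Q_r[f](\Theta) - I_{inner}(\Theta) = \mathcal{O}(\omega^{-(2m-1)/\alpha})$ pointwise in $\Theta$. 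Integrating over $W$ and absorbing the subdominant term gives the claimed $\mathcal{O}(\omega^{-(2m-1)/\alpha})$.

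The main subtlety, rather than obstacle, is ensuring the pointwise error estimate is \emph{uniform} in $\Theta\in W$ so that integration over $W$ preserves the asymptotic order and does not merely give a bound that degrades for some directions. This requires that the implied constant in Lemma~1 of \cite{Deano:2009jc} be controlled uniformly, which in turn follows from conditions \eqref{eq:cond1}--\eqref{eq:cond2} holding for \emph{all} $\Theta\in W$ together with the assumed analyticity of $r^{n-1}f(r\Theta)$ and $g(r\Theta)$ in $r$ near the origin; these guarantee that the expansion coefficients $a_k(\Theta)$ and the relevant radii of analyticity vary continuously and stay bounded away from degeneracy on $W$. I would therefore state explicitly that the error constant is uniform over $W$ before integrating, which is the one place where genuine care is needed beyond a transcription of the unbounded proof. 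No special treatment of the boundary integral's regularity is required for the \emph{error} claim, since that term is carried exactly; its own numerical resolution (it is a standard non-stationary steepest-descent integral along $\partial D$) is a matter for the quadrature stage, not for this asymptotic statement.
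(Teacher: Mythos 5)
Your proposal is correct and follows essentially the same route as the paper: the paper's own proof is a one-liner observing that Eq.~\eqref{eq:bounded_sd} holds under the stated assumptions and then invoking Theorem~\ref{thm:unbounded_rule}, which is exactly your cancellation-of-the-boundary-term argument made explicit. Your additional remark on uniformity of the quadrature error constant over $\Theta\in W$ is a genuine refinement of a point the paper (in both this proof and that of Theorem~\ref{thm:unbounded_rule}) passes over silently, but it does not change the structure of the argument.
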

\begin{proof}
Under the given assumptions, we have the that Eq. \eqref{eq:bounded_sd} holds, and we simply apply Theorem \ref{thm:unbounded_rule}, and the result follows.
\end{proof}

Let us end this section with some remarks

\begin{remark}
Only for the central contribution will we see the oscillations being confined to the radial direction, and in this case must the remaining integral be treated with oscillatory quadrature techniques with sufficiently high order in order to retain the asymptotic error decay indicated in here. In practice, for the method of steepest descent to be directly applicable to the remainder we need that $R(\Theta)>0$, $\forall \Theta\in W$. That means cases where the origin is contained in the interior of $D$, and intersections of such sets with cones, as are discussed in Theorem \ref{thm:unbounded}. Note that this stronger condition does not allow the origin to be on a curved section of the boundary.
\end{remark}

\begin{remark}
Theorem \ref{thm:bounded} gives a notably different decomposition that what was seen in Example \ref{ex:1}. In the example the method of steepest descent is applied to the integral given in it's original coordinates, and then the central contribution is replaced by an unbounded integral on polar form. However, if the method of steepest descent is applied to the remainder in \eqref{eq:bounded_rule}, this will result in a decomposition like that of Example \ref{ex:1}, with contributions being local to each corner of the rectangle. The contributions will be given in different coordinates, but at least asymptotically the decompositions must be the same. This will be touched upon in the following section.
\end{remark}

%%%%%%%%%%%%%%%%%%%%%%%%%%%%%%%%%%%%%%%%%%%%%%%%%%%%%%%%%%%%%%%%%%%%%%%%%%%%%%%%%%%%%
\section{Numerical experiments}\label{S:exp}
%%%%%%%%%%%%%%%%%%%%%%%%%%%%%%%%%%%%%%%%%%%%%%%%%%%%%%%%%%%%%%%%%%%%%%%%%%%%%%%%%%%%%
\begin{figure}[htbp]
\begin{center}
\includegraphics[width=.8\textwidth]{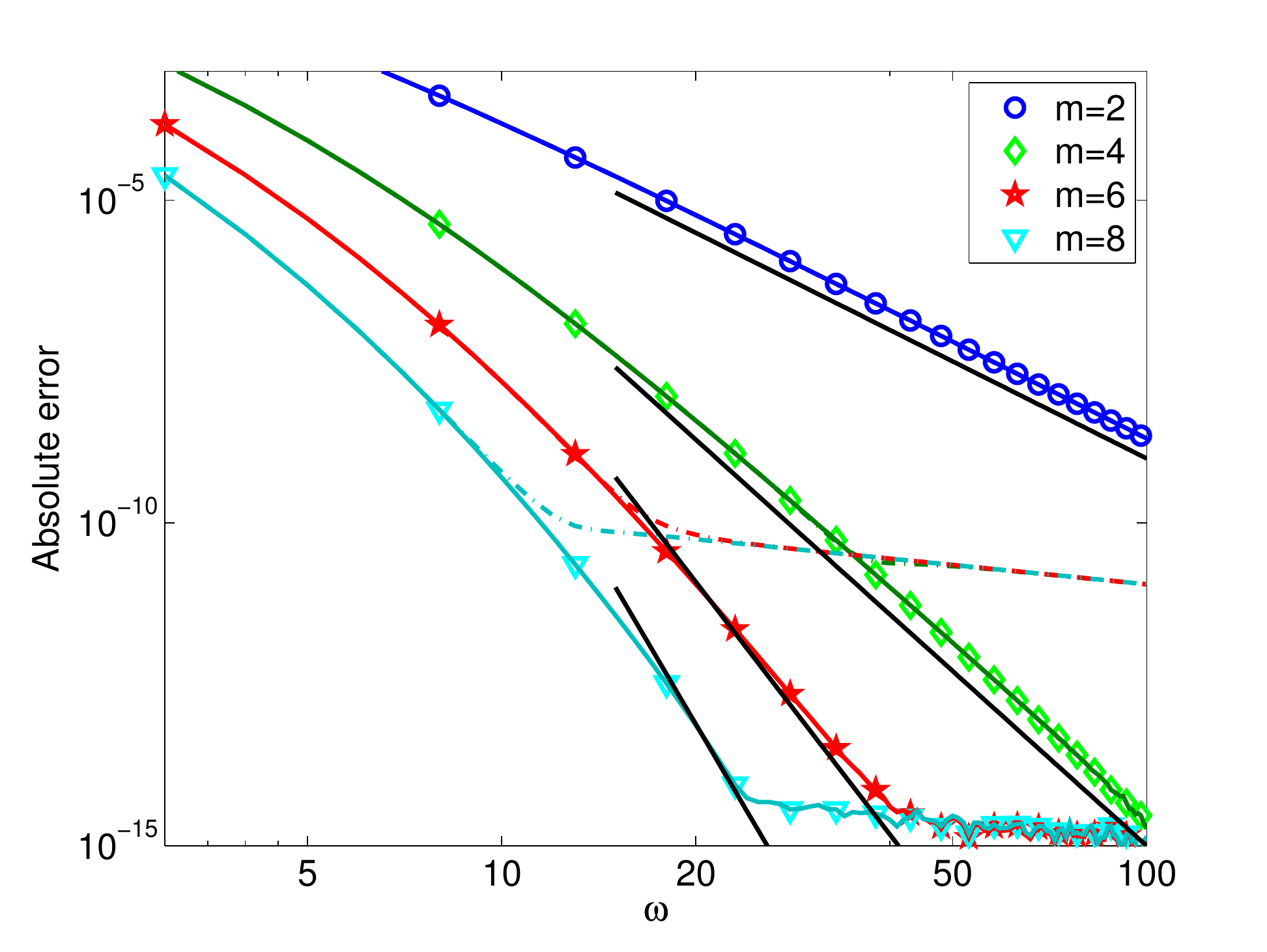}
\caption{Absolute errors in resolving the integral \eqref{eq:exp1}. Straight lines indicate lines of $\sim \omega^{-5}$, $\sim \omega^{-9}$, $\sim \omega^{-13}$ and $\sim \omega^{-17}$, respectively. Dashed lines indicate result of same experiment with $N=30$ points in each dimension for the outer integration.}
\label{fig:exp1}
\end{center}
\end{figure}

Let us first consider an artificial example. The following integral has a relatively simple closed form solution,
\begin{equation}\label{eq:exp1}
\int_{-\infty}^\infty \int_{-\infty}^\infty \int_{-\infty}^\infty  \frac{\e{\ii\omega \sqrt{x^2+2y^2+3z^2}}}{(x^2+2y^2+3z^2)(1+\sqrt{x^2+2y^2+3z^2})}{\rm d}z{\rm d}y{\rm d}x,
\end{equation}
\[
=\sqrt{\frac{2}{3}} \pi  (i \cos(\omega)+\sin(\omega)) (\pi +2 i \text{Ci}(\omega)-2 \text{Si}(\omega)).
\]
The spherical substitution is $x\to r\cos \varphi_1$, $y\to r\sin\varphi_1\cos \varphi_2$, $z\to r\sin\varphi_1\sin\varphi_2$, never mind that a scaling of each dimension would in this case produce a simpler expression. Now, in spherical variables,
\[
g(r,\varphi_1,\varphi_2)=r \sqrt{\cos^2(\varphi_1) + \frac12 (5 + \cos(2 \varphi_2))\sin^2(\varphi_1)},
\]
and then
\[
\rho(p,\varphi_1,\varphi_2)=\frac{ip}{\sqrt{\cos^2(\varphi_1) + \frac12 (5 + \cos(2 \varphi_2))\sin^2(\varphi_1)}}.
\]
Theorem \ref{thm:unbounded} indicates that a method with high asymptotic order is obtained by applying a Gaussian rule in the radial direction. For the given integral we apply formula \eqref{eq:unbounded_rule}, where we use Clenshaw-Curtis quadrature with $50$ points in the $\varphi_1$-direction, and the trapezoidal rule with $50$ points in the (periodic) $\varphi_2$-direction for integrating over the sphere. The function that here is being integrated is visualised in Fig. \ref{fig:exp12}. The result of this experiment can be seen in Fig. \ref{fig:exp1}, where the method has been applied with the number of complex Gaussian points in the radial integration being between $2$ and $8$. One observes an asymptotic decrease in the error matches the prediction of Theorem \ref{thm:unbounded}. Observe that machine precision can be attained with relatively few quadrature points when $\omega$ is sufficiently large. In Fig. \ref{fig:exp1} curves are also included that show the result of using less points ($30$) for the outer integration. These curves match what should be expected from Corollary \ref{corr:1}.

\begin{figure}[htbp]
\begin{center}
\includegraphics[width=.8\textwidth]{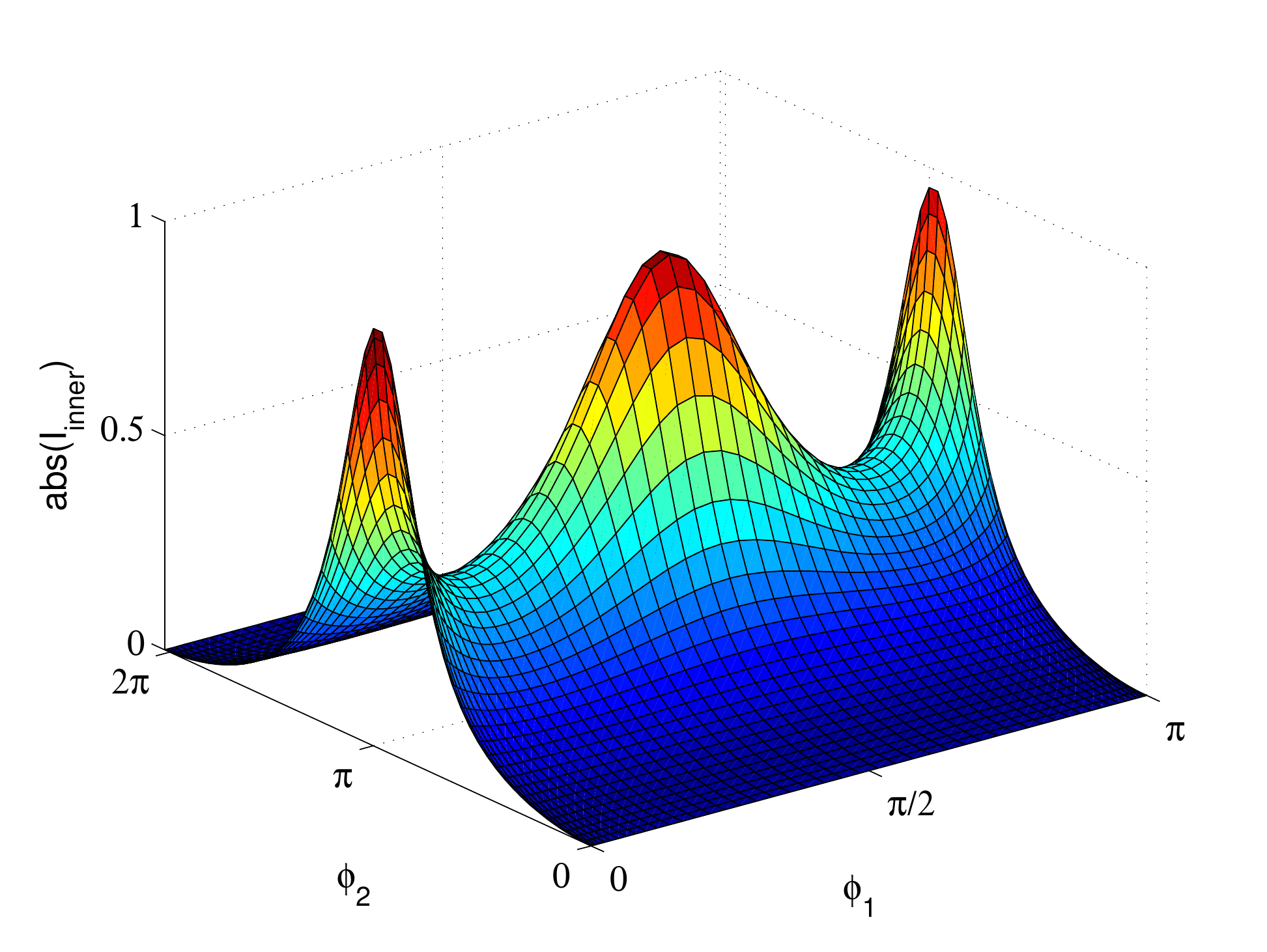}
\caption{The absolute value of $I_{inner}(\theta,\varphi)$, for $k=100$.}
\label{fig:exp12}
\end{center}
\end{figure}

%%%%%%%%%%%%%%%%%%%%%%%%%%%%%%%%%%%%%%%%%%%%%%%%%%%%%%%%%%%%%%%%%%%%%%%%%%%%%%%%%%%%
\subsection{A problem from acoustics}
%%%%%%%%%%%%%%%%%%%%%%%%%%%%%%%%%%%%%%%%%%%%%%%%%%%%%%%%%%%%%%%%%%%%%%%%%%%%%%%%%%%%

\begin{figure}
\begin{center}
\includegraphics[width=8cm]{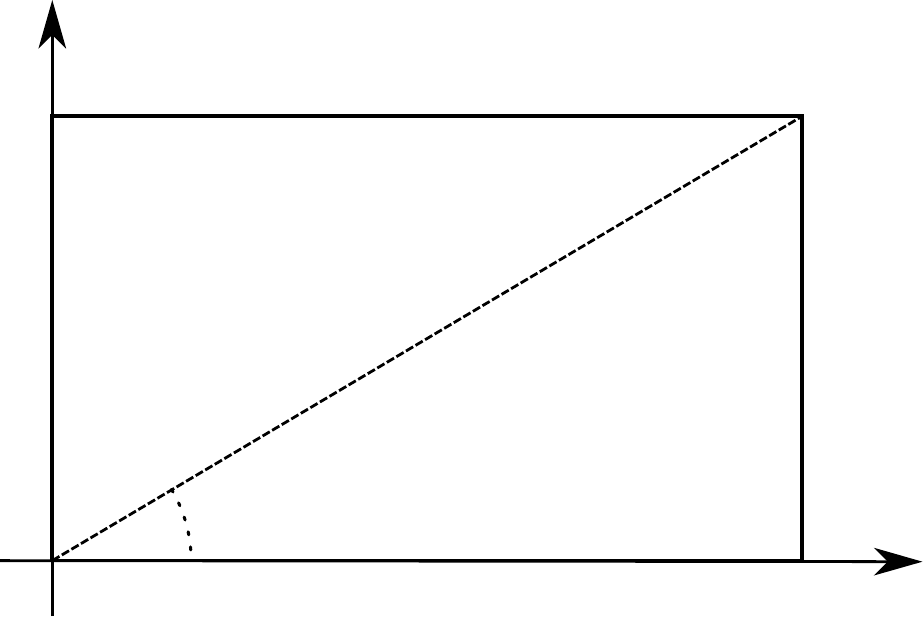}
\caption{Splitting of the outer integration.}
\label{fig:splitt}
\begin{picture}(0,0)(0,0)
\put(-63,57){$\beta$}
\put(80,40){$a$}
\put(-110,155){$b$}
\put(0,100){$\eta$}
\end{picture}
\end{center}
\end{figure}

For the problem of a rectangular duct terminating in an infinite baffle, sound pressure at the end of the duct expressed in terms of the axial velocity of the baffle, $v(x,y)$ is of the form (see \cite{Kemp:2001vj} for details),
\[
p(x_0,y_0)=C\int_0^a\int_0^b\frac{e^{\ii\omega\sqrt{(x-x_0)^2+(y-y_0)^2}}}{\sqrt{(x-x_0)^2+(y-y_0)^2}}v(x,y){\rm d}y{\rm d}x,
\]
for some constant $C$. We shall consider a model case of this integral,
\[
I[f]=\int_0^a\int_0^b\frac{e^{\ii\omega\sqrt{x^2+y^2}}}{\sqrt{x^2+y^2}}f(x,y){\rm d}y{\rm d}x,
\]
For simplicity we denote $\tilde{f}(r,\theta)=f(r\cos(\theta),r\sin(\theta))$. The central contribution is now computed from \eqref{eq:unbounded_rule}
\[
Q_r[f]=\frac{\ii}{\omega}\sum_{j=1}^m w_j\tilde{f}(\tfrac{\ii x_j}{\omega},\theta),
\]
and an approximation to the full integral is, by By Theorem \ref{thm:bounded},
\[
\int_0^{\pi/2}\tilde{Q}_r[f]{\rm d}\theta=\int_0^{\pi/2}Q_r[f]{\rm d}\theta-I_{ext}[f],
\]
where
\[
I_{ext}[f]=\ii\int_0^{\pi/2}\e{\ii\omega R(\theta)}\int_0^\infty \tilde{f}\big{(}R(\theta)+\ii p,\theta\big{)}\e{-\omega p}{\rm d}p{\rm d}\theta.
\]
In order to compute the last integral, we apply the method of steepest descent again, noting that the integral should be split in the $\theta$ direction. Let in the following $\beta=\arctan\frac b a$, as illustrated in Fig \ref{fig:splitt}. 
then $I_{ext}[f]=I_{ext}^1[f]+I_{ext}^2[f]$ with
\[
I_{ext}^1[f]=\ii\int_0^{\beta}\e{\ii\omega a\sec(\theta)}\int_0^\infty \tilde{f}\big{(}a\sec(\theta)+\ii p,\theta\big{)}\e{-\omega p}{\rm d}p{\rm d}\theta,
\]
and
\[
I_{ext}^2[f]=\ii\int_\beta^{\pi/2}\e{\ii\omega b\csc(\theta)}\int_0^\infty \tilde{f}\big{(}b\csc(\theta)+\ii p,\theta\big{)}\e{-\omega p}{\rm d}p{\rm d}\theta.
\]
We compute two paths of steepest descent for the $\theta$-integration in each of the integrals,
\[
h_{1,1}(q)=\sec^{-1}(1+\ii q/a),\qquad h_{1,2}(q)=\sec^{-1}\left(\frac{\eta+\ii q}a\right),
\]
\[
h_{2,1}(q)=\csc^{-1}\left(\frac{\eta+ \ii q}b\right),\qquad h_{1,2}(q)=\csc^{-1}(1+\ii q/b).
\]
where $\eta:=a\sec\beta=b \csc\beta=\sqrt{a^2+b^2}$. Thus
\[
I_{outer}[f]=I_{outer}^1[f]+I_{outer}^2[f]=I_{outer}^{1,1}[f]-I_{outer}^{1,2}[f]+I_{outer}^{2,1}[f]-I_{outer}^{2,2}[f],
\]
where
\[
I_{ext}^{1,1}[f]=-a\e{\ii\omega a}\int_0^{\infty}\int_0^\infty \frac{\tilde{f}\big{(}a+\ii q+\ii p,\sec^{-1}(1+\ii q/a)\big{)}}{(a + \ii q)\sqrt{2\ii q a-q^2} }\e{-\omega (p+q)}{\rm d}p{\rm d}q,
\]
\[
I_{ext}^{1,2}[f]=-a\e{\ii\omega \eta}\int_0^{\infty}\int_0^\infty \frac{\tilde{f}\big{(}\eta+\ii q+\ii p\sec^{-1}\left(\frac{\eta+\ii q}a\right)\big{)}}{(\eta + \ii q)\sqrt{b^2-q^2+2\ii q \eta} }\e{-\omega (p+q)}{\rm d}p{\rm d}q,
\]
\[
I_{ext}^{2,1}[f]=b\e{\ii\omega \eta}\int_0^{\infty}\int_0^\infty \frac{\tilde{f}\big{(}\eta+\ii q+\ii p,\csc^{-1}\left(\frac{\eta+\ii q}b\right)\big{)}}{(\eta + \ii q)\sqrt{a^2-q^2+2\ii q \eta} }\e{-\omega (p+q)}{\rm d}p{\rm d}q,
\]
\[
I_{ext}^{2,2}[f]=b\e{\ii\omega b}\int_0^{\infty}\int_0^\infty \frac{\tilde{f}\big{(}b+\ii q+\ii p,\csc^{-1}(1+\ii q/b)\big{)}}{(b + \ii q)\sqrt{2\ii q b-q^2} }\e{-\omega (p+q)}{\rm d}p{\rm d}q.
\]
Now one should note that the integrals $I_{outer}^{1,1}[f]$ and $I_{outer}^{2,2}[f]$ have a square-root singularity at $q=0$, this is because the points $[a,0]$ and $[0,b]$ are resonance points, which induce stationary points in $R(\theta)$. Substituting $p\to p/\omega$ and $q\to q^2/\omega$ in these integrals, and $p\to p/\omega$ and $q\to q/\omega$ in the other two integrals bring them on a form that can efficiently be evaluated with tensor-product half-space Gauss-Hermite $\times$ Gauss Laguerre, and Gauss-Laguerre $\times$ Gauss Laguerre.

To test the method we apply it to the following problem,
\[
\int_0^1\int_0^2\frac{e^{\ii \omega\sqrt{x^2+y^2}}}{\sqrt{x^2+y^2}}y\cos(x){\rm d}y{\rm d}x
=\frac{\ii}{\omega}\int_0^1 (e^{\ii\omega x}-e^{\ii\omega \sqrt{1+x^2}}) \cos(x){\rm d}x.
\]
By integrating the right hand integral to machine precision with \emph{Matlab's} \emph{quadgk} routine we have a reliable reference solution.

In Figure \ref{fig:exp2} we observe that on this problem machine precision can be reached with relatively few points in the complex plane. For producing this figure the non-oscillatory integration was performed with $30$-point Clenshaw-Curtis quadrature. Note that using the decomposition from Example \ref{ex:1} with modified central contribution (dashed lines) gives a very similar picture. 

\begin{figure}[htbp]
\begin{center}
\includegraphics[width=.8\textwidth]{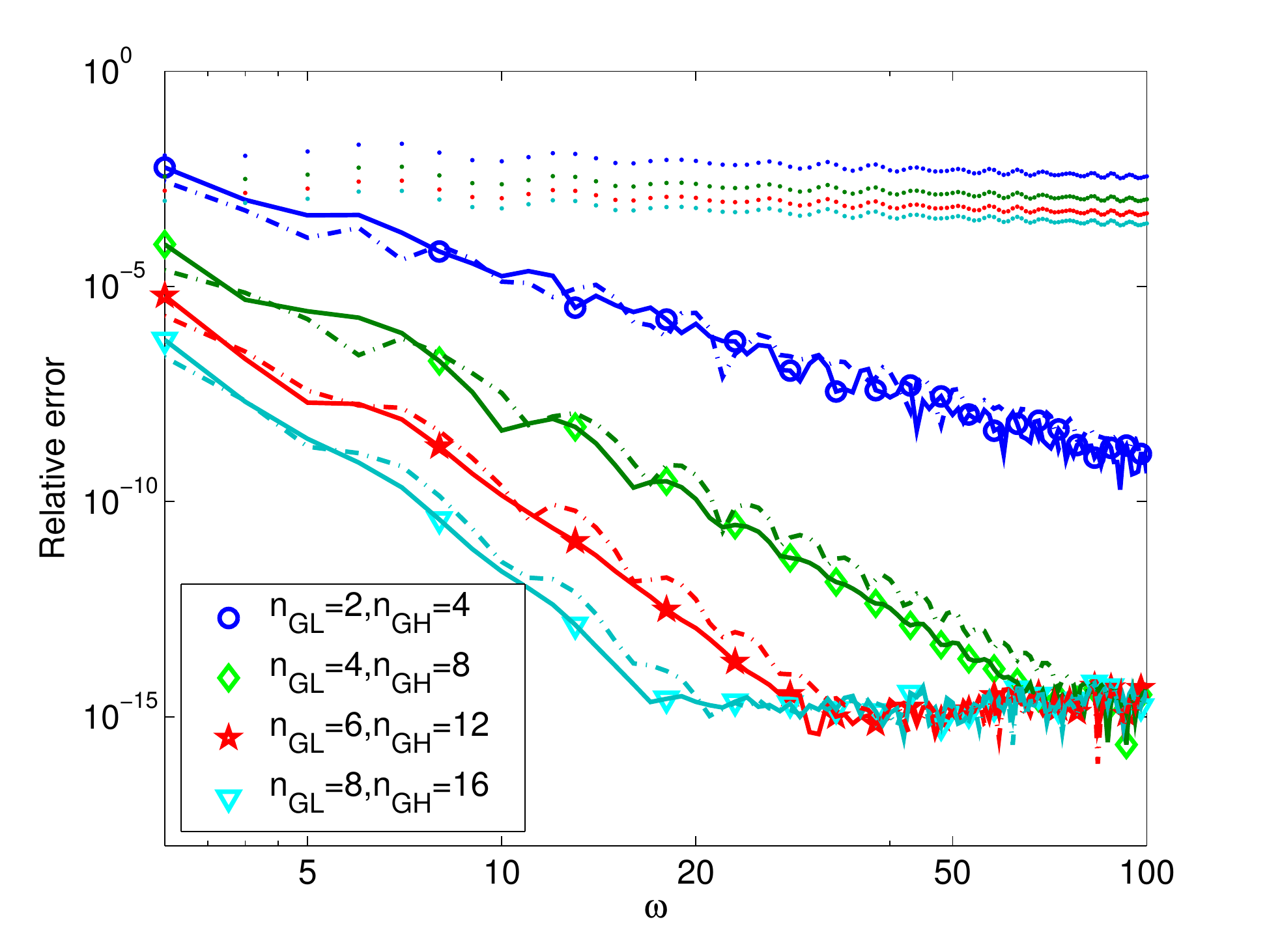}
\caption{Relative error using $n_{GL}=2,4,6,8$ Gauss-Laguerre points and $n_{GH}=2n_{GL}$ half-space Gauss-Hermite points. Dotted lines indicate a direct application of numerical steepest descent (cf. Ex. \ref{ex:1}), and dashed lines idem, but with modified central contribution.}
\label{fig:exp2}
\end{center}
\end{figure}

%%%%%%%%%%%%%%%%%%%%%%%%%%%%%%%%%%%%%%%%%%%%%%%%%%%%%%%%%%%%%%%%%%%%%%%%%%%%%%%%%%%%
\subsection{A problem from high frequency scattering}
%%%%%%%%%%%%%%%%%%%%%%%%%%%%%%%%%%%%%%%%%%%%%%%%%%%%%%%%%%%%%%%%%%%%%%%%%%%%%%%%%%%%

The scalar exterior scattering problem is often formulated in terms of the Kirchhoff-Helmholtz integral equation for an unknown surface field (see e.g. \cite{nedelec} for details). Here we shall limit the discussion to the case of Dirichlet boundary conditions, where one can use a Fredholm equation of the first kind,
\[
\int_{\partial \Omega} q(\vc{x})G(\vc{x}-\vc{y}){\rm d}\vc{s_x}=-u^i(\vc{y}),\qquad \vc{y}\in\partial\Omega,
\]
where $\partial\Omega$ denotes the surface of the scatterer, $G(\vc{x},\vc{y})$ the free-space Green's function, $u^i(\vc{y})$ is the incident field, and $q(\vc{x})$ is the unknown surface field. We shall in the following discuss the $3D$ case with an incident plane wave, 
\[
G(\vc{x},\vc{y})=\frac{\e{\ii k |\vc{x}-\vc{y}|}}{4\pi |\vc{x}-\vc{y}|},
\]
\[
u^i(\vc{y})=\e{\ii k (\vc{y}\cdot \vc{d})}, \quad {\rm where}\quad ||\vc{d}||=1.
\]
The field $q(\vc{x})$ is in general oscillatory with wavelength $k$, however, as has been demonstrated in e.g. \cite{Dominguez:2007th}, by factoring out the phase of the Kirchhoff approximation, $q(\vc{y})=\tilde{q}(\vc{x})\e{\ii k \vc{x}\cdot\vc{d}}$, the remaining amplitude function $\tilde{q}(\vc{x})$ will be non-oscillatory in illuminated regions of the scatterer. The new unknown satisfies
\[
\int_{\partial \Omega} \tilde{q}(\vc{x})\frac{\e{\ii k (|\vc{x}-\vc{y}|+(\vc{x}-\vc{y})\cdot \vc{d})}}{4\pi |\vc{x}-\vc{y}|}{\rm d}\vc{s_x}=-1,\qquad \vc{y}\in\partial\Omega.
\]
Now, relatively few degrees of freedom are needed to represent $\tilde{q}(\vc{x})$, but a discretisation matrix will necessarily involve integrals with the same kernel as the integral here on the left. 

Studying the case of a spherical scatterer of radius 1 centred at the origin, we set $\vc{x}=[1,0,0]^T$, and investigate the resulting integral for different incident directions $\vc{d}=[d_1,d_2,d_3]$. The corresponding integral equation is
\begin{multline}\label{eq:w0}
\int_0^\pi\int_0^{2\pi} \tilde{q}(\phi_1,\phi_2)
\frac{\e{\ii k(\sqrt{2+2\cos(\phi_2)\sin(\phi_1)}-\sin(\phi_1)\cos(\phi_2)d_1-\sin(\phi_1)\sin(\phi_2)d_2-(1+\cos(\phi_1))d_3)}}{4\pi \sqrt{2+2\cos(\phi_2)\sin(\phi_1)}}
\\ \sin(\phi_1){\rm d}\phi_2{\rm d}\phi_1=-1.
\end{multline}
Using the polar coordinate substitution, centred around $\phi_1=\pi/2$ $\phi_2=0$, on this integral we can show that 
\[
\frac{\partial g(r\Theta)}{\partial r}\big{|}_{r=0^+}=1+[-d_3,d_2]^T\Theta,
\]
and it follows from this that the conditions of Theorem \ref{thm:bounded} are satisfied whenever $d_1\neq 0$, which translates to $\vc{x}$ being on the shadow boundary.
Computing only the central contribution, i.e., $\int_0^{2\pi}Q_r[\phi]{\rm d}\theta$, gives a local approximation of the integral, much like what is achieved with the localised method of stationary phase \cite{Ganesh:2007ee}. A way to test the usefulness of this approximation is to consider a 3D version of the method in \cite{Asheim:2010gk}, whereby a prototype asymptotic Filon-approximation of $\tilde{q}$ would be given as,
\[
\tilde{q}(\vc{x})\sim -\frac 1 w_0.
\]
Here $w_0$ is constructed as a local approximation of the integral of the left hand side of \eqref{eq:w0} with $\tilde{q}(\phi_1,\phi_2)=1$. In Table \ref{tab:exp3} we see the result of such a test, where now $d=[\-\cos(\Psi),0,\sin(\Psi)]^T$, and we compute the relative error w.r.t the analytic solution given by the Mie series \cite{nedelec}. For this table the number of Laguerre points are $m=5$, and the angular direction is computed with a $100$ point trapezoidal rule. This test clearly shows the potential of steepest descent for high feequency scattering computations.

\begin{table}[htdp]
\caption{Relative error of prototype Filon-approximation to $\tilde{q}(\vc{x})$.}
\begin{center}
\begin{tabular}{c|cccc}
$\Psi \backslash$ $k$ & 50 & 100 & 150 & 200\\
\hline
0&5.43380e-04  & 1.37074e-04  & 6.10265e-05  & 3.43482e-05 \\
$\frac \pi {10}$ &2.28316e-03  & 1.11796e-03  & 7.42382e-04  & 5.56018e-04 \\
$\frac \pi {5} $&1.22758e-02  & 6.40899e-03  & 4.31359e-03  & 3.24677e-03\\
$\frac \pi {3} $&4.79852e-02  & 4.05070e-02  & 3.24495e-02  & 2.67878e-02 \\
\end{tabular}
\end{center}
\label{tab:exp3}
\end{table}%

\bibliographystyle{siam}
\bibliography{bibliography}

\begin{thebibliography}{10}

\bibitem{Asheim:2008eb}
{\sc A.~Asheim}, {\em {A combined Filon/asymptotic quadrature method for highly
  oscillatory problems}}, BIT, 48 (2008), pp.~425--448.

\bibitem{Asheim:2010kc}
{\sc A.~Asheim and D.~Huybrechs}, {\em Asymptotic analysis of numerical
  steepest descent with path approximations}, Found. Comput. Math., 10 (2010),
  pp.~647--671.

\bibitem{Asheim:2010gk}
\leavevmode\vrule height 2pt depth -1.6pt width 23pt, {\em {Local solutions to
  high-frequency 2D scattering problems}}, J. Comput. Phys., 229 (2010),
  pp.~5357--5372.

\bibitem{Blumenson:1960cj}
{\sc L.~E. Blumenson}, {\em {Classroom Notes: A Derivation of $n$-Dimensional
  Spherical Coordinates}}, Am. Math. Mon., 67 (1960), pp.~63--66.

\bibitem{Bouwkamp:1954wt}
{\sc C.~Bouwkamp}, {\em {Diffraction theory}}, Rep. Prog. Phys., 17 (1954),
  p.~35.

\bibitem{BRUNO:2007gt}
{\sc O.~Bruno and C.~Geuzaine}, {\em An o(1) integration scheme for
  three-dimensional surface scattering problems}, {J}. {C}omput. {A}ppl.
  {M}ath., 204 (2007), pp.~463--476.

\bibitem{ChandlerWilde:2012kq}
{\sc S.~N. Chandler-Wilde, I.~G. Graham, S.~Langdon, and E.~A. Spence}, {\em
  {Numerical-asymptotic boundary integral methods in high-frequency acoustic
  scattering}}, Acta numerica, 21 (2012), pp.~89--305.

\bibitem{Deano:2009jc}
{\sc A.~Dea{\~n}o and D.~Huybrechs}, {\em Complex {G}aussian quadrature of
  oscillatory integrals}, Numer. Math., 112 (2009), pp.~197--219.

\bibitem{Dominguez:2007th}
{\sc V.~Dominguez and I.~Graham}, {\em {A hybrid numerical-asymptotic boundary
  integral method for high-frequency acoustic scattering}}, Num. Math., 106
  (2007), pp.~471--510.

\bibitem{Dominguez:2011td}
\leavevmode\vrule height 2pt depth -1.6pt width 23pt, {\em {Stability and error
  estimates for Filon--Clenshaw--Curtis rules for highly oscillatory
  integrals}}, IMA J. Numer. Anal.,  (2011).

\bibitem{Ganesh:2007ee}
{\sc M.~Ganesh, S.~Langdon, and I.~H. Sloan}, {\em {Efficient evaluation of
  highly oscillatory acoustic scattering surface integrals}}, J. Comput. Appl.
  Math., 204 (2007), pp.~363--374.

\bibitem{Huybrechs:2012gm}
{\sc D.~Huybrechs and S.~Olver}, {\em {Superinterpolation in highly oscillatory
  quadrature}}, Found. Comput. Math., 12 (2012), pp.~203--228.

\bibitem{Huybrechs:analyticcontinuation}
{\sc D.~Huybrechs and S.~Vandewalle}, {\em On the evaluation of highly
  oscillatory integrals by analytic continuation}, SIAM J. Numer. Anal., 44
  (2006), pp.~1026--1048.

\bibitem{Huybrechs:multivariate}
{\sc D.~Huybrechs and S.~Vandewalle}, {\em The construction of cubature rules
  for multivariate highly oscillatory integrals}, {M}ath. {C}omp., 76 (2007),
  pp.~1955--1980.

\bibitem{Iserles:2004ke}
{\sc A.~Iserles and S.~P. N\o{}rsett}, {\em {On Quadrature Methods for Highly
  Oscillatory Integrals and Their Implementation}}, BIT, 44 (2004),
  pp.~755--772.

\bibitem{Iserles:usingderivatives}
{\sc A.~Iserles and S.~P. N{\o}rsett}, {\em Efficient quadrature of highly
  oscillatory integrals using derivatives}, Proc. Roy. Soc. A., 461 (2005),
  pp.~1383--1399.

\bibitem{Iserles:multivariate_stationary}
{\sc A.~Iserles and S.~P. N{\o}rsett}, {\em {On the computation of highly
  oscillatory multivariate integrals with stationary points}}, BIT, 46 (2006),
  pp.~549--566.

\bibitem{Iserles:multivariate_derivatives}
{\sc A.~Iserles and S.~P. N{\o}rsett}, {\em Quadrature methods for multivariate
  highly oscillatory integrals using derivatives}, Math. Comp., 75 (2006),
  pp.~1233--1258.

\bibitem{Kemp:2001vj}
{\sc J.~A. Kemp, D.~M. Campbell, and N.~Amir}, {\em {Multimodal radiation
  impedance of a rectangular duct terminated in an infinite baffle}}, Acta
  Acustica united with Acustica, 87 (2001), pp.~11--15.

\bibitem{Levin:1996kp}
{\sc D.~Levin}, {\em Fast integration of rapidly oscillatory functions}, J.
  Comput. Appld. Maths., 67 (1996), pp.~95--101.

\bibitem{nedelec}
{\sc J.~N{\'e}d{\'e}lec}, {\em Acoustic and electromagnetic equations: Integral
  representations for harmonic problems}, vol.~144, Springer, NY, 2001.

\bibitem{Olver:2005jp}
{\sc S.~Olver}, {\em {Moment-free numerical integration of highly oscillatory
  functions}}, IMA J. Numer. Anal., 26 (2005), pp.~213--227.

\bibitem{Olver:2006iq}
\leavevmode\vrule height 2pt depth -1.6pt width 23pt, {\em {On the Quadrature
  of Multivariate Highly Oscillatory Integrals Over Non-polytope Domains}},
  Numer. Math., 103 (2006), pp.~643--665.

\bibitem{pierce1989acoustics}
{\sc A.~Pierce}, {\em Acoustics: an introduction to its physical principles and
  applications}, J. Acoust. Soc. Am., 1989.

\bibitem{voronovich1999wave}
{\sc A.~Voronovich}, {\em Wave scattering from rough surfaces}, Springer,
  Berlin, 1999.

\end{thebibliography}
\end{document}